		  \newcommand{\Rn}{\mathbb{R}^{N}}
		  \newcommand{\Cn}{\mathbb{C}^N}
		  \newcommand{\R}{\mathbb{R}}
		  \newcommand{\C}{\mathbb{C}}
		  \newcommand{\Z}{\mathbb{Z}}
		  \newcommand{\Sn}{\mathrm{S}^{N-1}}
		  \newcommand{\dsum}{\displaystyle\sum}
		  \newcommand{\dprod}{\displaystyle\prod}
		  \newcommand{\dint}{\displaystyle\int}
		  \newcommand{\bx}{\mathbf{x}}
		  \newcommand{\bz}{\mathbf{z}}
		  \newcommand{\bm}{\mathbf{m}}
		  \newcommand{\bn}{\mathbf{n}}	
		  \newcommand{\by}{\mathbf{y}}
		  \newcommand{\lb}{\left\lbrace}
		  \newcommand{\rb}{\right\rbrace}
		  \newcommand{\al}{\left\langle}
		  \newcommand{\ar}{\right\rangle}
		  \newcommand{\ra}{\rightarrow}
		  \newcommand{\bxi}{\boldsymbol{\xi}}
	           \newcommand{\bo}{\boldsymbol{\omega}}
	 	  \newcommand{\hf}{\nicefrac{1}{2}}
		  \newcommand{\ol}{\overline}
		  \newcommand{\ds}{\displaystyle}
		  \newcommand{\btheta}{\boldsymbol{\theta}}
		  \newcommand{\lp}{\left(}
		  \newcommand{\rp}{\right)}
		  \newcommand{\Imaginary}{\mathrm{Im}}
		  \newcommand{\vol}{\mathrm{vol}_{N}}
		  \numberwithin{equation}{section}
		  \newcommand{\mU}{\mathcal{U}}
\newtheorem*{theoremwn}{Theorem}
\newtheorem{theorem}{Theorem}
\newtheorem*{conjecture}{Conjecture}
\newtheorem{proposition}{Proposition}
\theoremstyle{definition}
\newtheorem{definition}{Definition}
\newtheorem{remark}{Remark}
\begin{document}

\title{The Blaschke-Santal\'o Inequality }
\markright{The Blaschke-Santal\'o Inequality}

\author[Bianchi]{Gabriele Bianchi}
\address{Dipartimento di Matematica e Informatica ``U. Dini'', Universit\`a di Firenze  }
\email{gabriele.bianchi@unifi.it}

\author[Kelly]{Michael Kelly}
\address{Department of Mathematics, University of Texas}
\email{mkelly@math.utexas.edu}

\subjclass[2010]{52A40, 42A05, 46E22}

\begin{abstract}
The Blaschke-Santal\'o Inequality  is the assertion that the volume product of a centrally symmetric convex body in Euclidean space is maximized by (and only by) ellipsoids. In this paper we give a Fourier analytic proof of this fact.
\end{abstract}

\maketitle

\section{Introduction}
Let $K$ be a  \emph{convex body} in $\Rn$, that is a  compact convex subset of $\Rn$ with non-empty interior, and assume that the origin is an interior point of $K$. We associate to $K$ another convex body $K^{*}$, called the  {\it dual body} or {\it polar body} of $K$, defined by 
	\begin{equation*}\label{dualBody}
		K^{*}=\{ \by\in\Rn : \bx\cdot\by\leq 1 \;\text{ for each }\bx\in K \},
	\end{equation*} 
where $\bx\cdot\by$ is the usual scalar product. The terminology {\it dual body} is fitting, because the unit ball of any norm in $\Rn$ is a convex body and its dual body is the unit ball of the corresponding dual norm. 

Assume that $K$ is origin symmetric, i.e. $K=-K$. The product
		\[
			P(K)=\vol(K)\vol(K^*),
		\]
where $\vol$ denotes $N$-dimensional Lebesgue measure in $\Rn$, is called the \emph{volume product} of $K$. For a general convex body $K$  the volume product $P(K)$ is defined as the minimum, for $x$ in the interior of $K$, of $\vol(K)\vol\left( (K-x)^* \right)$. Here  $K-x$ is the translate of $K$ by $-x$. The functional $P(K)$ is an affine invariant and thus  all ellipsoids in $\Rn$ have the same volume product, and all parallelotopes in $\Rn$ have the same volume product. Furthermore, $P(K^{*})=P(K)$, because $\left(K^*\right)^*=K$, and as a consequence, for instance, the volume product of the unit cube in $\R^3$ (the $\ell_{\infty}$ unit ball) is the same as the volume product of the octahedron (the $\ell_{1}$ unit ball). 
All of these observations were made by Kurt Mahler in the 1930's, in connection to transference principles for linear forms (see Cassels~\cite{C}).

A sharp upper bound for the volume product is given by the Blaschke-Santal\'o Inequality. 
\begin{theoremwn}[The Blaschke-Santal\'o Inequality]\label{santalo}
Let $B$ denote the Euclidean unit ball of $\Rn$. For every convex body $K$ in $\Rn$
\begin{equation}\label{blaschkesantalo}
 P(K)\leq P(B),
\end{equation}
and equality holds if and only if $K$ is an ellipsoid.
\end{theoremwn}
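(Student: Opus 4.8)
The plan is to establish the inequality in three stages: reduce to origin-symmetric bodies, convert the geometric statement into a ``functional'' inequality for log-concave functions via an integral transform, and then prove that functional inequality by Fourier analysis, with the Gaussian emerging as the unique extremizer precisely because it is a fixed point of the Fourier transform. For the first stage, note that $P$ is an affine invariant, so it suffices to find $\sup_K P(K)$ together with the maximizers up to invertible affine maps; choosing the Santal\'o point of $K$ (the interior point $x_0$ minimizing $\vol((K-x)^{*})$, characterized by the centroid of $(K-x_0)^{*}$ lying at the origin) and running a first-variation argument reduces matters to $K=-K$, and the equality statement for a general body is then recovered from the symmetric case by a classical argument. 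Henceforth $K=-K$, with gauge $\|\cdot\|_K$ and support function $h_K=\|\cdot\|_{K^*}$.

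For the second stage, the identity $\int_{\Rn}e^{-\frac12\|\bx\|_K^2}\,d\bx=2^{N/2}\Gamma(\tfrac N2+1)\vol(K)$, its analogue for $K^{*}$, and the fact that the Legendre conjugate of $\tfrac12\|\cdot\|_K^2$ equals $\tfrac12\|\cdot\|_{K^*}^2$ show that the bound $P(K)\le P(B)=\pi^N/\Gamma(\tfrac N2+1)^2$ is equivalent to the statement that, for every even convex $u\colon\Rn\to[0,\infty]$ with $u(\mathbf 0)=0$,
\[
  \lp\int_{\Rn}e^{-u(\bx)}\,d\bx\rp\lp\int_{\Rn}e^{-u^{*}(\by)}\,d\by\rp\le(2\pi)^N,
\]
$u^{*}$ being the Legendre conjugate, with equality if and only if $u$ is a positive definite quadratic form (equivalently, $K$ an ellipsoid). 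I expect these two reductions to be essentially bookkeeping.

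The third stage is the heart of the matter. When $u=\tfrac12\langle A\bx,\bx\rangle$, the functions $e^{-u}$ and $e^{-u^{*}}$ are, up to a constant, Fourier transforms of one another, since $\widehat{e^{-\frac12\langle A\cdot,\cdot\rangle}}=(2\pi)^{N/2}(\det A)^{-1/2}e^{-\frac12\langle A^{-1}\cdot,\cdot\rangle}$; this is exactly the borderline case. I would therefore compare $e^{-u}$ with $\widehat{e^{-u^{*}}}$ directly. Set $T(u)=\int_{\Rn}e^{-u(\bx)}\widehat{e^{-u^{*}}}(\bx)\,d\bx$, which is real by evenness and, by Parseval, symmetric under $u\leftrightarrow u^{*}$; a Gaussian computation gives $T(u)=2^{N/2}\pi^N$ and $\int_{\Rn}e^{-u}\int_{\Rn}e^{-u^{*}}=2^{N/2}T(u)$ for quadratic $u$. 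The goal would then be the two-sided estimate
\[
  \lp\int_{\Rn}e^{-u}\rp\lp\int_{\Rn}e^{-u^{*}}\rp\le 2^{N/2}T(u)\le 2^{N/2}\cdot 2^{N/2}\pi^N=(2\pi)^N,
\]
both steps degenerating to equalities exactly for quadratic $u$. The right-hand bound $T(u)\le 2^{N/2}\pi^N$ is an estimate for the oscillatory integral $\int_{\Rn}\int_{\Rn}e^{-u(\bx)-u^{*}(\by)}\cos\langle\bx,\by\rangle\,d\bx\,d\by$, which I would approach via Bochner's theorem (the function $e^{-u}\ast e^{-u}$ is positive definite, its Fourier transform being $\ge 0$) together with a quadrature argument; the left-hand bound is a correlation inequality between $e^{-u}$ and the Fourier transform of $e^{-u^{*}}$, which I would attack by semigroup interpolation along $u\mapsto u+t|\cdot|^2$ — an operation that is convolution by a Gaussian on the Fourier side — so that the extremizer is forced to be Gaussian. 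A possible alternative for this stage is to prove the functional inequality by interpolation along the Ornstein--Uhlenbeck semigroup, whose diagonalization in Hermite functions is itself a Fourier-analytic fact distinguishing the Gaussian.

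The principal obstacle is Stage 3, and it is visible already in why the naive attack fails: the pointwise Fenchel--Young inequality $u(\bx)+u^{*}(\by)\ge\langle\bx,\by\rangle$ yields only $\int_{\Rn}\int_{\Rn}e^{-u(\bx)-u^{*}(\by)}\,d\bx\,d\by\le\int_{\Rn}\int_{\Rn}e^{-\langle\bx,\by\rangle}\,d\bx\,d\by=+\infty$, because the quadratic form $u(\bx)+u^{*}(\by)-\langle\bx,\by\rangle$ degenerates along the ``graph'' directions. One must instead use the oscillation of the kernel $e^{i\langle\bx,\by\rangle}$ to recover this hidden non-degeneracy — quantitatively, that the Fourier transform cannot carry the mass of $e^{-u^{*}}$ off the region where $e^{-u}$ lives, except in the Gaussian borderline case — and then extract the rigidity that $u$ must be quadratic, hence $K$ an ellipsoid, tracing this back through Stages 2 and 1. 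Securing the sharp constant $(2\pi)^N$ together with this equality characterization is where essentially all the effort goes.
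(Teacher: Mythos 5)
Your Stages 1 and 2 are genuine but standard reductions: passing to the Santal\'o point and then to origin-symmetric bodies is classical, and the equivalence of $P(K)\leq P(B)$ with the functional inequality $\lp\int_{\Rn}e^{-u}\rp\lp\int_{\Rn}e^{-u^{*}}\rp\leq(2\pi)^{N}$ for even convex $u$ (via $u=\tfrac12\|\cdot\|_{K}^{2}$) is Ball's well-known observation. But these reductions do not weaken the problem --- the functional Santal\'o inequality is at least as strong as the geometric one --- and Stage 3, which therefore carries the entire content of the theorem, is not a proof. You propose to interpolate the quantity $T(u)=\int_{\Rn}e^{-u}\,\widehat{e^{-u^{*}}}$ between the two sides via the chain $\lp\int e^{-u}\rp\lp\int e^{-u^{*}}\rp\leq 2^{N/2}T(u)\leq(2\pi)^{N}$, but neither link is established: for the upper bound on $T(u)$ you offer only the phrase ``Bochner's theorem together with a quadrature argument,'' and for the lower bound a ``semigroup interpolation'' whose monotonicity is asserted, not verified. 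The only evidence supplied for either inequality is the Gaussian computation, where both degenerate to equalities --- which is exactly the situation in which checking the extremizer gives no information about the direction of the inequalities away from it. The rigidity step (``extract that $u$ must be quadratic'') is likewise only named. As written, Stage 3 is a research plan, not an argument, and since Stages 1--2 are reversible bookkeeping, nothing has been proved.

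For contrast, the paper's route never passes to log-concave functions. It studies $\rho(K)=\inf\int_{\Rn}|F|^{2}$ over continuous $F\in L^{2}(\Rn)$ with $|F(\mathbf{0})|\geq1$ and $\supp\widehat{F}\subset K$, evaluates $\rho(K)=1/\vol(K)$ by Cauchy--Schwarz and Parseval, and then restricts an admissible $F$ to lines through the origin: by the Paley--Wiener--Stein theorem the radial extension of $r\mapsto F(r\btheta)$ is admissible for $\rho\lp\|\btheta\|_{K^{*}}B\rp$, and integrating the resulting lower bound in polar coordinates yields $\rho(K)/\vol(K^{*})\geq\rho(B)/\vol(B^{*})$, hence the inequality; equality forces the Radon transform $S_{K}(t,\btheta)$ to agree with that of a ball in every direction, and a lemma of Meyer--Reisner then identifies $K$ as an ellipsoid. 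If you wish to salvage your functional route, you would need to substitute for Stage 3 an actual proof of the functional Santal\'o inequality (Ball's for even $u$, or Artstein--Klartag--Milman's in general), none of which passes through your quantity $T(u)$.
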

Inequality~\eqref{blaschkesantalo}  was first proved by Blaschke~\cite{Blaschke1917} for $N=2,3$, by Santal\'o~\cite{Santalo1949} for any $N$. Petty~\cite{Petty1985}, completed the proof of the equality case. These results were obtained in the context of affine differential geometry, as  a consequence of results on the affine isoperimetric inequality and of its equality cases.
Later proofs, which are more direct and use classical tools of convexity, are due, among others, to Saint Raymond~\cite{SR}, for origin symmetric bodies, and to Meyer and Pajor~\cite{MeyPaj1990}. See Schneider~\cite{S2} for a  detailed account of the literature on the volume product.

It is our goal in this paper to prove the previous theorem, in the class of origin symmetric convex bodies, using a Fourier analytic approach. See Theorem~\ref{oursantalo} in Section~\ref{proof_blascke_santalo} for the statement. 

The minimum of the volume product for $N\geq3$ is still unknown. It is conjectured that, for a convex body $K$, we have 
\begin{equation}\label{minvolproduct}
 P(K)\geq \frac{(N+1)^{N+1}}{(N!)^2},
\end{equation}
with equality precisely for simplices, and that, for an origin symmetric convex body $K$, we have 
\begin{equation}\label{mahler}
 P(K)\geq \frac{4^N}{N!},
\end{equation}
with equality holding for affine transforms of cubes, of crosspolytopes and, more generally, for Hanner polytopes. 
%The latter inequality~\eqref{mahler} 
Mahler \cite{M} was able to prove~\eqref{minvolproduct} and~\eqref{mahler} when $N=2$. 

Inequality~\eqref{mahler} is known as \emph{Mahler conjecture} and it has remained open for over three quarters of a century.
It has been proved in certain classes of bodies, for instance when $K$ is a zonotope (see Reisner~\cite{Reisner1,Reisner2}) or when $K$ is $1$-unconditional, i.e. an affine transform of $K$ is symmetric with respect to each coordinate hyperplane (see Saint Raymond~\cite{SR}).
Recently it has been proved that the cube (see Petrov et al.~\cite{NPRZ}) and every Hanner polytopes (see Kim~\cite{Kim2013})  are  local minimizers of the volume product (and strict local minimizers in the proper sense) in the class of origin-symmetric convex bodies.
The interested reader is advised to consult   Tao~\cite{Tao} for a very nice discussion about the conjecture and some of its subtleties. 

As far as we know,  the Fourier analytic approach to study the volume product has been first used by F.~Nazarov. He~\cite{N} used it, together with H\"ormander's solution to the $\bar{\partial}$ problem, to prove the Bourgain-Milman Inequality~\cite{BM}
\[
 P(K)\geq c^N\frac{4^N}{N!},
\]
where $c>0$ is a constant not depending on $N$ and $K\subset\Rn$ is an origin symmetric convex body. Ryabogin and Zvavitch~\cite{RyaZva} describes the ideas behind Nazarov's proof as well as those behind the proofs of some of the results on the Mahler conjecture mentioned above.

The main object in our investigation is the following functional.
\begin{definition}\label{dirac}
Given an origin symmetric convex body $K\subset\Rn$ define
	\begin{equation*}%\label{eta}
		\rho(K)=\inf\dint_{\Rn} |F{\bf(x)}|^{2}d\bx,
	\end{equation*}
where the infimum is taken over the class of square-integrable continuous functions $F:\R^N \ra \C$ that satisfy
		\begin{enumerate}
			\item\label{rho_1} $ |F({\bf 0})|\geq 1$, and
			\item\label{rho_2} $\widehat{F}(\bxi)=0$ if $\bxi\in\Rn\setminus K$.
		\end{enumerate}
\end{definition}
As we will prove in Section~\ref{proof_blascke_santalo}, $\rho(K)=1/\vol(K)$ and the only minimizers of $\rho$ are admissible multiple of the inverse Fourier transform of the characteristic function $1_K$ of $K$. 
On the other hand, the Paley-Wiener Theorem (see next section) states that the analytic extension to $\Cn$ of every function $F$ admissible for $\rho$ has an  asymptotic behavior at infinity which is  related to the norm whose unit ball is $K^*$. 
This connection is at the hearth of this proof of the Blaschke-Santal\'o Inequality. 

To deal with the equality cases, we will show that if $K$ is origin symmetric and $P(K)=P(B)$, then, for each direction $\btheta\in\Sn$, there exists an ellipsoid $E$ (which a priori may depend on $\btheta$) such that for each hyperplane $L$ orthogonal to $\btheta$ the $(N-1)$-volume of the sections $K\cap L$ and $E\cap L$ coincide. 
This property, and a result proved by M.~Meyer and S.~Reisner~\cite[Lemma 3]{MR}, imply that $K$ is an ellipsoid.

In section 4 we introduce a variational quantity $\eta(K)$ associated with an origin symmetric convex body. It is essentially an $L^1$ version of $\rho(K)$. We state a conjecture (due to the second author and Jeffrey Vaaler) regarding the exact value of $\eta(K)$ and prove it when $K$ is a ball or a cube.

For another problem in convex geometry where the Fourier transform of $1_K$ in $\C^n$ plays an important role see Bianchi~\cite{Bia13}.

Gabriele Bianchi wishes to acknowledge that all proofs in this paper are due to Michael Kelly, except for that of the equality case in the Blaschke-Santal\'o Inequality, which is due to himself.

\section{Background and Notation} Throughout this paper $z$ denotes an element of the complex numbers $\C$, and $\ol{z}$ denotes the complex conjugate of $z$. The symbol $\mU=\lb z\in\C:\Imaginary(z)>0 \rb$  denotes the upper half plane of $\C$, where $\Imaginary(z)$ is the \emph{imaginary part} of $z$. We use boldface letters or symbols to denote vectors, $\bx$ denotes a vector in $\Rn$, $\bz$ a vector in $\Cn$, and $\Imaginary(\bz)$ denotes the vector of the imaginary parts of $\bz$.  
We write $\mathrm{vol}_k$ for $k$-dimensional Lebesgue measure in $\Rn$. By $B$ and $\Sn$ we denote respectively the Euclidean unit ball and unit sphere in $\Rn$. The symbol $\omega_{N-1}$ indicates the surface area of $\Sn$.

The \emph{support function} of a convex body $K$ in $\Rn$ is defined, for $\bx\in\Rn$, by 
\[
h_K(\bx)=\sup \{\bx\cdot\by : \by\in K\}.
\]
If $K$ is an origin symmetric convex body and $\|\cdot\|_{K^*}$ denotes the norm in $\Rn$ whose unit ball is $K^*$, i.e.
$
\|\bx\|_{K^*}=\inf\{\lambda>0 : x\in \lambda K^*\},
$
we have
\begin{equation}\label{support_and_norm}
h_K(\bx)=\|\bx\|_{K^*}.
\end{equation}

Given a convex body $K$ in $\R^n$, $t\in\R$ and $\btheta\in \Sn$,  we denote by $S_K(t,\btheta)$ the \emph{Radon transform} of the characteristic function $1_K$ of $K$
\begin{equation*}%\label{definition_RT}
S_K(t,\btheta)=\mathrm{vol}_{N-1}\left(\left\{\bx\in K : \bx\cdot\btheta=t\right\}\right).
\end{equation*}

For a function $F\in L^2(\Rn)$ the \emph{Fourier transform} $ \widehat{F}$ is defined for $\bxi\in\Rn$ by
	\begin{equation*}
		\widehat{F}(\bxi)=\ds\lim_{T\ra\infty}\dint_{[-T,T]^{N}} e^{-2\pi i {\bx\cdot\bxi }} F{\bf (x)}d\bx.
	\end{equation*}

A function $ F:\Cn\ra \C$ is an \emph{entire function} if it is holomorphic, in each coordinate separately,  at each $\bz\in\Cn$. If $F$ is an entire function, the complex conjugate $ F^{*}$ of $F$, defined by $F^{*}(\bz)=\ol{F(\ol{\bz})}$, is also an entire function.

Let $K$ be an origin symmetric convex body in $\Rn$. Following Stein and Weiss \cite[\S 3.4]{SW} we call an entire function $F$ \emph{of exponential type $K^*$} if for every $\epsilon>0$ there exists a constant $c_{\epsilon}>0$ such that, for every $\by\in\Rn$,
	\begin{equation}\label{growthEstimate}
		|F(i \by)|\leq c_{\epsilon} e^{2\pi (1+\epsilon) \|\by\|_{K^*}}.
	\end{equation}
When $F\in L^2(\Rn)$ is such that the support of $\widehat{F}$ is contained in $K$ then it is well known that $F$ is the restriction to $\Rn$ of the entire function defined, for $\bz\in\Cn$, by the formula 
\begin{equation}\label{inversion_formula}
 F(\bz)=\int_K e^{2\pi i \bz\cdot\bxi} \widehat{F}(\bxi) d\bxi.
\end{equation}
This representation and the Cauchy-Schwarz Inequality imply 
\begin{equation*}%\label{eq:mono}
%\begin{aligned}
|F(i\by)|\leq\int_K \left| e^{-2\pi \by\cdot\bxi}\widehat{F}(\bxi)\right| d\bxi
%&\leq e^{2\pi h_K(y)}\int_K e^{2\pi i \bx\cdot\bxi} \widehat{F}(\bxi) d\bxi\\
\leq \vol(K)^{1/2}\left(\int_K |\widehat{F}(\bxi)|^2 d\bxi\right)^{1/2}e^{2\pi h_K(\by)},
%\end{aligned}
\end{equation*}
i.e., in view of~\eqref{support_and_norm}, they imply that $F$ is of exponential type $K^*$. The following theorem (due to Paley and Wiener in the one dimensional case and Stein in the general case) proves that these properties are equivalent.
\begin{theoremwn}[Paley-Wiener-Stein~\cite{SW}]
Let $F\in L^2(\Rn)$ and let $K$ be an origin symmetric convex body. Then $F$ is a.e. equal to the restriction to $\R^N$ of an entire function of exponential type $K^*$ if and only if  the support of $\widehat{F}$ is contained in $K$.
\end{theoremwn}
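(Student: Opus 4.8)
First I would dispose of one implication, which is already contained in the discussion preceding the statement: if $\supp\widehat F\subseteq K$ then the inversion formula~\eqref{inversion_formula} realizes $F$ (a.e.) as the restriction of an entire function, and the Cauchy--Schwarz estimate displayed just above shows that this entire function is of exponential type $K^*$. So the task is the converse: assuming $F\in L^2(\Rn)$ agrees a.e.\ with an entire function $F$ of exponential type $K^*$, show that $\widehat F=0$ a.e.\ outside $K$.

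The core of the argument is a Plancherel--P\'olya type lemma: for every $\by\in\Rn$ the translate $F(\cdot+i\by)$ lies in $L^2(\Rn)$, one has $\widehat{F(\cdot+i\by)}(\bxi)=e^{-2\pi\by\cdot\bxi}\,\widehat F(\bxi)$ for a.e.\ $\bxi$, and, for every $\epsilon>0$,
\begin{equation*}
\|F(\cdot+i\by)\|_{L^2(\Rn)}\le C_\epsilon\,e^{2\pi(1+\epsilon)\|\by\|_{K^*}},
\end{equation*}
where $C_\epsilon$ depends on $F$ and $\epsilon$ but not on $\by$. To prove it I would first upgrade the hypothesis, which controls $F$ only on the imaginary subspace $i\Rn$, to a bound on each horizontal translate $\Rn+i\by$, by a Phragm\'en--Lindel\"of argument: restrict $F$ to complex lines $\bx_0+\C\btheta$ with $\btheta\in\Sn$, on which $F$ is a one-variable entire function whose type in the direction $\btheta$ is governed by $h_K(\btheta)=\|\btheta\|_{K^*}$, and combine the classical one-dimensional Plancherel--P\'olya inequality with the subharmonicity of $\log|F|$. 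The identity for $\widehat{F(\cdot+i\by)}$ then follows by shifting the contour $\Rn$ to $\Rn+i\by$ in the inversion integral, the contributions at infinity being absorbed by the bounds just obtained.

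Granting this lemma, the conclusion is short, and it is here that the hypothesis $K=-K$ enters. Fix $\btheta\in\Sn$ and set $a=h_K(\btheta)$. Applying the lemma with $\by=-t\btheta$ for $t>0$, and using $\|-t\btheta\|_{K^*}=ta$ (valid because $K^*=-K^*$), Plancherel gives
\begin{equation*}
\int_{\Rn}|\widehat F(\bxi)|^2\,e^{4\pi t\,\btheta\cdot\bxi}\,d\bxi=\|F(\cdot-it\btheta)\|_{L^2(\Rn)}^2\le C_\epsilon^2\,e^{4\pi(1+\epsilon)ta}.
\end{equation*}
Keeping on the left only the region $\{\bxi:\btheta\cdot\bxi\ge a+\eta\}$, with $\eta>0$, yields
\begin{equation*}
\int_{\{\bxi:\,\btheta\cdot\bxi\ge a+\eta\}}|\widehat F(\bxi)|^2\,d\bxi\le C_\epsilon^2\,e^{4\pi t(\epsilon a-\eta)};
\end{equation*}
choosing $\epsilon<\eta/a$ and letting $t\to\infty$ forces $\widehat F=0$ a.e.\ on $\{\bxi:\btheta\cdot\bxi\ge a+\eta\}$, hence, $\eta$ being arbitrary, a.e.\ on the open half-space $\{\bxi:\btheta\cdot\bxi>h_K(\btheta)\}$. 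Since $K=\bigcap_{\btheta\in\Sn}\{\bxi:\btheta\cdot\bxi\le h_K(\btheta)\}$ and its complement is a countable union of such open half-spaces (one near each separating direction), we conclude that $\widehat F=0$ a.e.\ on $\Rn\setminus K$.

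The main obstacle is the Plancherel--P\'olya lemma. The hypothesis only pins $F$ down on the imaginary subspace, and converting this into genuine $L^2$ control on the translates $\Rn+i\by$---which is precisely what makes both the contour shift and the concluding Plancherel step legitimate---is the real technical content of the Paley--Wiener--Stein theorem; everything downstream of it is soft.
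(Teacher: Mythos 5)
You are proving a statement that the paper itself does not prove: the Paley--Wiener--Stein theorem is quoted from the reference \cite{SW}, and the only part established in the text is the easy implication (support of $\widehat F$ in $K$ implies exponential type $K^*$), via the inversion formula and Cauchy--Schwarz, exactly as you observe. So there is no internal proof to match; the relevant comparison is with the argument in Stein--Weiss, and your outline for the converse is indeed that standard argument: a Plancherel--P\'olya estimate on the horizontal translates $\Rn+i\by$, the contour-shift identity $\widehat{F(\cdot+i\by)}(\bxi)=e^{-2\pi\by\cdot\bxi}\widehat F(\bxi)$, and then Plancherel together with the exponential weight to kill $\widehat F$ a.e.\ on each open half-space $\{\bxi:\btheta\cdot\bxi>h_K(\btheta)\}$, countably many of which cover $\Rn\setminus K$. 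That downstream part is correct and complete, including the observation of where the symmetry of $K$ is used.

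The genuine gap is the lemma itself, and it is larger than your closing paragraph concedes. The paper's definition of exponential type $K^*$ only constrains $F$ on the purely imaginary subspace, i.e.\ it bounds $|F(i\by)|$ and says nothing about $|F(\bx+i\by)|$ for $\bx\neq\mathbf{0}$. Your plan is to restrict $F$ to complex lines $\bx_0+\C\btheta$ and apply the one-dimensional Plancherel--P\'olya inequality there; but that inequality presupposes that the restriction is of exponential type \emph{on that line}, which is exactly the information the hypothesis withholds when $\bx_0\neq\mathbf{0}$. The Phragm\'en--Lindel\"of step you invoke to bridge this requires an a priori growth bound (finite order, indeed order $<2$ in the relevant sectors) on all of $\Cn$, and the stated hypotheses ($F\in L^2(\Rn)$ plus a bound on $i\Rn$) do not obviously supply one; without it, boundedness on the two boundary rays of a quadrant does not control the interior, as order-two examples such as $e^{-iz^2}$ show. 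In Stein--Weiss the definition demands $|F(\bx+i\by)|\le c_\epsilon e^{2\pi(1+\epsilon)\|\by\|_{K^*}}$ for \emph{all} $\bz=\bx+i\by$, and with that reading your lemma is precisely the Plancherel--P\'olya theorem --- still a real theorem, but a quotable one --- and the rest of your argument closes. So either adopt the full definition explicitly or supply the missing a priori bound; as written, the reduction of the lemma to ``classical one-dimensional Plancherel--P\'olya plus subharmonicity'' is circular at the point where you need the type of $F$ along lines not passing through the origin.
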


\section{Proof of the Blaschke-Santal\'o Inequality}\label{proof_blascke_santalo}
\begin{theorem}\label{oursantalo}
For every origin symmetric convex body $K$ in $\Rn$
\begin{equation}\label{ourblaschkesantalo}
 P(K)\leq P(B),
\end{equation}
and equality holds if and only if $K$ is an ellipsoid.
\end{theorem}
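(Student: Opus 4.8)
The plan is to read the volume product through the functional $\rho$ and then to localize the estimate to one dimension via the Fourier slice theorem. Since $K$ is origin symmetric, $P(K)=\vol(K)\vol(K^*)$, so once one knows that $\rho(K)=1/\vol(K)$ inequality~\eqref{ourblaschkesantalo} is equivalent to $\rho(K)\ge\vol(K^*)/\vol(B)^2$. For the value of $\rho$: if $F$ is admissible then $F(\mathbf 0)=\int_K\widehat F(\bxi)\,d\bxi$ by~\eqref{inversion_formula}, so Cauchy--Schwarz and Plancherel give $|F(\mathbf 0)|^2\le\vol(K)\int_K|\widehat F|^2=\vol(K)\,\|F\|_2^2$; hence $\rho(K)\ge 1/\vol(K)$, and testing against $\widehat F=\vol(K)^{-1}\mathbf 1_K$ shows equality. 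The equality case of Cauchy--Schwarz forces $\widehat F$ to be a.e.\ constant on $K$, so the minimizers are exactly the admissible multiples of $F_K:=\vol(K)^{-1}\widehat{\mathbf 1_K}$.

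\emph{A one dimensional inequality in each direction.} Fix $\btheta\in\Sn$ and put $a_\btheta=h_K(\btheta)=\|\btheta\|_{K^*}$. By the Fourier slice theorem the restriction of $\widehat{\mathbf 1_K}$ to the line $\R\btheta$ is the one variable Fourier transform of $s\mapsto S_K(s,\btheta)$, which is supported in $[-a_\btheta,a_\btheta]$ (this is the one variable shadow of the growth estimate~\eqref{growthEstimate}). Thus $g_\btheta(r):=\widehat{\mathbf 1_K}(r\btheta)=\vol(K)\,F_K(r\btheta)$ is an even entire function of exponential type $2\pi a_\btheta$, square integrable against $r^{N-1}\,dr$, and I claim
\begin{equation}\label{eq:directional}
 \int_0^\infty|g_\btheta(r)|^2\,r^{N-1}\,dr\ \ge\ \frac{|g_\btheta(0)|^2}{N\,\vol(B)^2\,a_\btheta^{\,N}},
\end{equation}
with equality if and only if $g_\btheta$ is a multiple of the Fourier transform of $\bigl(1-s^2/a_\btheta^2\bigr)_+^{(N-1)/2}$. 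To prove~\eqref{eq:directional} I would radialize: the function $G_\btheta(\bx):=g_\btheta(|\bx|/a_\btheta)$ on $\Rn$ is radial, and running the slice theorem backwards shows that $\widehat{G_\btheta}$ is supported in the unit ball $B$; so $G_\btheta$ is, after normalization, admissible for $\rho(B)$, and the reproducing kernel estimate $|G_\btheta(0)|^2\le\vol(B)\,\|G_\btheta\|_{L^2(\Rn)}^2$ (the first paragraph applied to $B$), together with $\|G_\btheta\|_{L^2(\Rn)}^2=\omega_{N-1}a_\btheta^{\,N}\int_0^\infty|g_\btheta(r)|^2r^{N-1}\,dr$ and $N\vol(B)=\omega_{N-1}$, gives~\eqref{eq:directional}. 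The equality case of the reproducing kernel bound forces $\widehat{G_\btheta}$ to be constant on $B$, i.e.\ $G_\btheta$ a multiple of $\widehat{\mathbf 1_B}$; unwinding the change of variables and the Bessel identity for $\widehat{\mathbf 1_B}$ then gives the stated profile for $g_\btheta$.

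\emph{Integration and the equality case.} Writing $\rho(K)=\|F_K\|_2^2$ in polar coordinates and using $g_\btheta(0)=\vol(K)$ in~\eqref{eq:directional},
\[
 \rho(K)=\int_{\Sn}\int_0^\infty|F_K(r\btheta)|^2 r^{N-1}\,dr\,d\btheta\ \ge\ \frac1{N\vol(B)^2}\int_{\Sn}h_K(\btheta)^{-N}\,d\btheta\ =\ \frac{\vol(K^*)}{\vol(B)^2},
\]
the last equality being the polar formula for $\vol(K^*)$; combined with $\rho(K)=1/\vol(K)$ this is $P(K)\le\vol(B)^2=P(B)$. If $P(K)=P(B)$ then~\eqref{eq:directional} is an equality for almost every $\btheta$, hence for every $\btheta$ by continuity of $h_K$ and of $S_K(\cdot,\btheta)$ in $\btheta$; by the equality discussion above this means that for each $\btheta$ the Radon transform $S_K(\cdot,\btheta)$ is a positive multiple of $\bigl(1-s^2/a_\btheta^2\bigr)_+^{(N-1)/2}$, i.e.\ $S_K(\cdot,\btheta)=S_{E_\btheta}(\cdot,\btheta)$ for some ellipsoid $E_\btheta$. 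Lemma~3 of Meyer and Reisner~\cite{MR} then forces $K$ to be an ellipsoid, and the converse is the affine invariance of $P$.

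\emph{Expected main obstacle.} The heart of the matter is the radialization step: one must show that the even one variable Paley--Wiener functions of type $[-1,1]$ are precisely the radial profiles of the functions on $\Rn$ whose Fourier transform is supported in $B$, and with exactly the $L^2$ correspondence used above, so that the reproducing kernel of that Paley--Wiener space --- namely $\widehat{\mathbf 1_B}$, of value $\vol(B)$ at the origin --- yields the sharp constant in~\eqref{eq:directional} and the Bessel profile $(a_\btheta r)^{-N/2}J_{N/2}(2\pi a_\btheta r)$ is matched correctly with the ellipsoidal section function $\bigl(1-s^2/a_\btheta^2\bigr)_+^{(N-1)/2}$. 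This is where the Fourier slice theorem and the Paley--Wiener--Stein growth bound~\eqref{growthEstimate} genuinely interact; everything else (convergence of the polar integrals, the measure zero issue in the equality case, and the invocation of~\cite{MR}) is routine bookkeeping.
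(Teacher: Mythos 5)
Your proposal is correct and follows essentially the same route as the paper: evaluate $\rho(K)=1/\vol(K)$ via Cauchy--Schwarz and Plancherel, radialize each directional slice to obtain an admissible function for $\rho(h_K(\btheta)B)$ (the paper makes the radialization step you flag as the main obstacle precise via the even entire extension $H_\btheta(z_1^2+\cdots+z_N^2)$ and the Paley--Wiener--Stein theorem, rather than by ``running the slice theorem backwards''), integrate in polar coordinates, and settle the equality case through the Radon transform and Meyer--Reisner's Lemma~3. The only cosmetic difference is that you apply the directional inequality directly to the explicit minimizer $F_K$, whereas the paper applies it to an arbitrary even admissible $F$ and then takes the infimum.
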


\begin{proof}Our proof of \eqref{ourblaschkesantalo} proceeds in two parts. First we show that
	      \begin{equation}\label{rhoeval}
		    \rho(K)=\dfrac{1}{\vol(K)}
	      \end{equation}
and then we  show that
	      \begin{equation}\label{rhoineq}
		    \dfrac{\rho(B)}{\vol(B^{*})}\leq \dfrac{\rho(K)}{\vol(K^{*})}.
	      \end{equation}
Plugging (\ref{rhoeval}) into (\ref{rhoineq}) and rearranging terms yields~\eqref{ourblaschkesantalo}.

We say that a continuous $F\in L^2(\Rn)$ is \emph{admissible for $\rho(K)$} provided that $F$ satisfies conditions \eqref{rho_1} and \eqref{rho_2} in the definition of $\rho$.

{\bf Let us  prove~\eqref{rhoeval}.} Let $F(\bx)$ be an admissible function for $\rho(K)$. Condition \eqref{rho_1} is equivalent to $\left|\int_K \widehat{F}(\bxi)d\bxi\right|\geq1$, due to formula \eqref{inversion_formula}. We can thus write
\begin{equation}\label{bs:0}
\begin{aligned}
1&\leq\left|\int_K \widehat{F}(\bxi)d\bxi\right|^2\\
&\leq \vol(K)\int_K \left|\widehat{F}(\bxi)\right|^2 d\bxi\\
&=\vol(K) \int_{\Rn} \left|{F}(\bx)\right|^2 d\bx\\
&\leq  \vol(K)\rho(K).
\end{aligned}
\end{equation}
The inequality in the second line is a consequence of Cauchy-Schwarz Inequality, and the equality in the third line is a consequence of Parseval's Identity. 
Note that, by the discussion of the equality cases in Cauchy-Schwarz Inequality, $F$ minimizes $\rho(K)$ if and only if $F$ is an admissible multiple of the inverse Fourier transform of $1_K$, i.e.
\begin{equation}\label{F_equality}
 F(\bx)=\frac{\alpha}{\vol(K)}\int_K e^{2\pi i \bx\cdot\bxi}d\bxi
\end{equation}
for some $\alpha\in\C$, with $|\alpha|=1$. 
This concludes the proof of~\eqref{rhoeval}.

\indent {\bf Now let us prove~\eqref{rhoineq}.} Let $F(\bx)$ be an admissible function for $\rho(K)$. Without loss of generality we may assume that $F(\bx)$ is even. This is because the even part of $F(\bx)$ is admissible for $\rho(K)$ and (by the triangle inequality) has a $L^2$-norm less than or equal to that of $F(\bx)$. Let us denote by $F$ also the entire extension of $F$ defined by~\eqref{inversion_formula}.

For each $\btheta\in\Sn,$ we define a function $G_{\btheta}:\C\to\C$ as
		\[
			 	G_{\btheta}(z)= F(z\btheta).
		\]
This is an even entire function of exponential type $[-\|\btheta\|_{K^*}^{-1},\|\btheta\|_{K^*}^{-1}]$, by (\ref{growthEstimate}). Note that, since $G_{\btheta}(z)$ is even, there exists an entire function $H_{\btheta}(z)$ such that $G_{\btheta}(z)=H_{\btheta}(z^{2})$. Finally we define $ R_{\btheta}:\Cn\to\C$ as the radial extension of $G_{\btheta}(z)$, i.e., as
	\[
		R_{\btheta}(\bz)=H_{\btheta}\left( z_{1}^{2}+\cdots +z_{N}^{2} \right).
	\] 
By Fubini's Theorem, $\int_0^{+\infty}|F(r\btheta)|^2r^{N-1} dr$ exists finite almost for every $\btheta\in\Sn$, i.e. the restriction of $R_{\btheta}$ to $\Rn$ is square-summable almost for every $\btheta\in\Sn$. 
We clearly have 
\begin{equation}\label{bs:1}
\begin{aligned}
\dint_{\Rn}  |F(\bx)|^{2}d\bx &=  \dint_{\Sn}\dint_{0}^{\infty}  |F(r\btheta)|^{2} r^{N-1}dr\ d\sigma(\btheta)\\
&= \dfrac{1}{\omega_{N-1}}\dint_{\Sn}\dint_{\Rn} |R_{\btheta}(\bx)|^{2} d\bx\ d\sigma(\btheta),
\end{aligned}
\end{equation}
where  $d\sigma$ is the standard surface measure on  $\Sn$.

The function $R_{\btheta}(\bz)$ satisfies $|R_{\btheta}(\bf 0)|\geq 1$, it is entire and the support of the Fourier transform of the restriction of $R_{\btheta}$ to $\Rn$ is contained in the ball $\|\btheta\|_{K^*} B$. The last claim is a consequence of the Paley-Wiener Theorem, of the fact that $R_{\btheta}(\bz)$ is of exponential type $\|\btheta\|_{K^*}^{-1} B$ and of  $\|\btheta\|_{K^*}^{-1} B=(\|\btheta\|_{K^*} B)^*$. 
The function $R_{\btheta }(\bx)$ is thus admissible for $ \rho\left(\|\btheta\|_{K^*} B\right)$. Therefore 
\begin{equation}\label{bs:2}
\begin{aligned}
\dint_{\Rn} |R_{\btheta}(\bx)|^{2} d\bx&\geq \rho\left(\|\btheta\|_{K^*} B\right)\\
&=\|\btheta\|_{K^*}^{-N} \rho(B),
\end{aligned}
\end{equation}
since $\rho$ is positively homogeneous of degree $-N$.

The set $K^*$ can be represented in polar coordinates as 
\[
 K^*=\{\rho\btheta : \btheta\in\Sn, 0\leq\rho\leq\|\btheta\|_{K^*}^{-1}\},
\]
and therefore
\begin{equation}\label{dualVolumeInt}
		\dfrac{\vol(K^{*})}{\vol(B^{*})}= \dfrac{1}{\omega_{N-1}}\dint_{\Sn}\|\btheta\|_{K^*}^{-N} d\sigma(\btheta).
\end{equation}

Using \eqref{bs:1}, \eqref{bs:2} and~\eqref{dualVolumeInt} we obtain
\begin{equation}\label{bs:3}
\dint_{\Rn}  |F(\bx)|^{2}d\bx\geq \rho(B)\dfrac{\vol(K^{*})}{\vol(B^{*})}.
\end{equation}
The inequality~\eqref{rhoineq} then follows upon taking the infimum over all admissible functions $F(\bx)$.

\indent {\bf Let us now prove that we have equality in~\eqref{ourblaschkesantalo} only when $K$ is an ellipsoid.} 
Let $F(x)$ be as in~\eqref{F_equality}, with $\alpha=1$. This function is admissible for $\rho(K)$, is even and $\int_{\Rn} |F(x)|^2 d\bx=\rho(K)$. Therefore equality holds in~\eqref{ourblaschkesantalo} if and only if equality holds in~\eqref{bs:3}.
The proof of \eqref{bs:3} reveals that this happens if and only if $R_{\btheta}(\bx)$ minimizes $\rho(\|\btheta\|_{K^*} B)$ almost for every $\btheta\in\Sn$. 
In view of the discussion at the end of the proof of~\eqref{rhoeval} and of the definition of $R_{\btheta}$, this is equivalent to saying that  almost for every $\btheta\in\Sn$ and for every $r\geq0$, $F(r\btheta)$ coincides with an admissible multiple of the restriction to the ray $\{r\theta : r\geq0\}$ of the inverse Fourier transform of $1_{\|\btheta\|_{K^*}B}$. 
This is equivalent to saying that there exists $\alpha(\theta)\in\C$ with $|\alpha(\theta)|=1$ such that for each $r\in\R$ 
  \begin{equation}\label{bs:4}
   \frac{1}{\vol(K)}\int_K e^{2\pi i r\btheta\cdot\bxi}d\bxi=\frac{\alpha(\btheta)}{\vol(\|\btheta\|_{K^*}B)}\int_{\|\btheta\|_{K^*}B} e^{2\pi i r\btheta\cdot\bxi}d\bxi.
  \end{equation}
Since, by Fubini's Theorem, the $n$-dimensional inverse Fourier transform of $1_K$ is the $1$-dimensional inverse Fourier transform of the Radon transform $S_K$ of $K$, \eqref{bs:4} can be rewritten   as
\begin{equation*}%\label{F_equality}
   \frac{1}{\vol(K)}
   \int_\R e^{2\pi i rt}S_K(t,\btheta)\ dt
   =\frac{\alpha(\btheta)}{\vol(\|\btheta\|_{K^*}B)}
   \int_\R e^{2\pi i rt}S_{\|\btheta\|_{K^*} B}(t,\btheta)\ dt.
  \end{equation*}
This identity implies that for each $t\in\R$ and almost for every $\btheta\in\Sn$
\begin{equation}\label{bs:5}
 \frac{1}{\vol(K)} S_K(t,\btheta)
 =\frac{\alpha(\btheta)}{\vol(\|\btheta\|_{K^*}B)} S_{\|\btheta\|_{K^*} B}(t,\btheta).
\end{equation}
By continuity the previous identity holds for each $\btheta\in\Sn$. Moreover, since $|\alpha(\btheta)|=1$ and each other term in~\eqref{bs:5} is non-negative, we have $\alpha(\btheta)=1$. 

For  $\btheta\in S^{N-1}$ and $t\in\R$ let
\[
D_K(t,\btheta)=\vol\left(
\left\{\bx\in K : \bx\cdot\btheta\geq t\  \|\btheta\|_{K^*} \right\}\right).
\]
Meyer and Reisner~\cite[Lemma 3]{MR} proves that  if  $D_K(t,\btheta)$ does not depend on $\btheta$ for each $t\in[0,1]$ then $K$ is  an ellipsoid. We prove that this is the case. 
We write
\begin{equation} \label{DS}
	D_K(t,\btheta)=\dint_{t \|\btheta\|_{K^*}}^{\|\btheta\|_{K^*}}S_{K}(r,\btheta) dr.
\end{equation}
Formula~\eqref{bs:5} implies	
\begin{equation*}
\begin{aligned}
D_K(t,\btheta)
&=\frac{\vol(K)}{h_K(\theta)^N\vol(B)} 
\dint_{t \|\btheta\|_{K^*}}^{\|\btheta\|_{K^*}}S_{\|\btheta\|_{K^*} B}(r,\btheta) dr \\
&=\frac{\omega_{N-2}\vol(K)}{h_K(\theta)^N\vol(B)} 
\dint_{t \|\btheta\|_{K^*}}^{\|\btheta\|_{K^*}}  \left(\|\btheta\|_{K^*}^2-r^2\right)^{\frac{N-1}2} dr\\
&=\frac{\omega_{N-2}\vol(K)}{\vol(B)} 
\dint_t^1  \left(1-s^2\right)^{\frac{N-1}2} ds.
\end{aligned}	
\end{equation*} 
This concludes the proof.
\end{proof}
\begin{remark}
The validity of~\eqref{bs:5}, with $\alpha(\btheta)=1$, for a given $\btheta$ and for each $t\in\R$ is equivalent to the existence of an ellipsoid $E(\btheta)$ such that $S_K(t,\btheta)=S_{E(\btheta)}(t,\btheta)$ for each $t\in\R$.
\end{remark}

\section{A related variational quantity}
Another extremal quantity related to $\rho(K)$ is the ``$L^{1}-$version'' $\eta(K)$. 
\begin{definition}
 Given a convex body $K$ define
	\begin{equation*}%\label{eta}
		\eta(K)=\inf\dint_{\Rn} F{\bf(x)}d\bx
	\end{equation*}
where the infimum is taken over the class of non-zero continuous functions $F(\bx)$ that satisfy
		\begin{enumerate}
			\item $F(\bx)\geq 0$ for every $\bx\in\Rn$,
			\item $ F({\bf 0})\geq 1$, and
			\item $\widehat{F}(\bxi)=0$ if $\bxi\in\Rn\setminus  K$.
		\end{enumerate}
\end{definition}
When $K$ is a cube, the infimum is achieved by the Fej\'er kernel. An extremal function for a generic origin symmetric convex body $K$ can then be thought of as a ``Fej\'er kernel associated with $K$.'' On another level, the determination of $\eta(K)$ is perhaps the simplest form of the so-called {\it Beurling-Selberg extremal problem} in several variables. The difficulty in determining $\eta(K)$ is the non-negativity, which is awkward from the Fourier analytic point of view. In the single variable case the function $F(x)$ can be factored as $F(x)=|U(x)|^{2}$ where $U(x)$ is admissible for $\rho(K/2)$. In several variables such a factorization is not generally available, and is known to be false for trigonometric polynomials of two or more variables. 
%\begin{figure}[h]
%\begin{center}
%	\includegraphics[scale=.4]{fejer.png}\hspace*{1.4cm}
%\end{center}
%	\caption{The extremal function for $\eta(K)$ in 1 dimension is the Fejer kernel.}
%\end{figure}
	However, Jeff Vaaler and the second author conjecture that there are extremal functions for $\eta(K)$that do admit such a factorization. 
\begin{conjecture}%\mcomment{are these conjectures equivalent? If this is the case it is not clear}
	For any origin symmetric convex body $K\subset \Rn$, we have 
		\[
			\eta(K)=\dfrac{2^{N}}{\vol(K)}.
		\]
\end{conjecture}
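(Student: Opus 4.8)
\emph{Plan of proof.} The idea is to recast $\eta(K)$ as an extremal problem for positive definite functions — the \emph{Tur\'an problem} for $K$ — settle the easy inequality by an explicit construction, obtain the reverse inequality in the presence of enough symmetry by a quadrature argument, and identify the remaining general case as the genuine difficulty.

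\emph{Step 1: reduction to a Tur\'an-type problem.} If $F$ is admissible for $\eta(K)$, then $F\geq 0$, $F\in L^1(\Rn)$, and $\widehat F$ is continuous and supported in the bounded set $K$, hence $\widehat F\in L^1$ and $F=\check{\widehat F}$. Writing $\phi=\widehat F$, the inequality $F\geq 0$ forces $\phi$ to be positive definite, since for all $\mathbf t_1,\dots,\mathbf t_m$ and $c_1,\dots,c_m\in\C$
\[
\sum_{j,k}c_j\overline{c_k}\,\phi(\mathbf t_j-\mathbf t_k)=\dint_{\Rn}F(\bx)\Bigl|\sum_j c_j e^{-2\pi i\bx\cdot\mathbf t_j}\Bigr|^2 d\bx\geq 0 ,
\]
while $\supp\phi\subseteq K$, $\phi(\mathbf 0)=\int_{\Rn}F$ and $\int_{\Rn}\phi=F(\mathbf 0)$. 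Conversely, Bochner's theorem turns every continuous positive definite $\phi$ with $\supp\phi\subseteq K$ into an admissible $F=\check\phi$. After rescaling $\phi$ this gives
\[
\eta(K)^{-1}=\sup\lb \dint_{\Rn}\phi \;:\; \phi\text{ continuous and positive definite},\ \supp\phi\subseteq K,\ \phi(\mathbf 0)=1 \rb ,
\]
i.e.\ $\eta(K)^{-1}$ is the Tur\'an constant of $K$, and the conjecture is equivalent to the assertion that this constant equals $\vol(K/2)=2^{-N}\vol(K)$.

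\emph{Step 2: the upper bound, and the product law.} Let $U(\bx)=\vol(K/2)^{-1}\int_{K/2}e^{2\pi i\bx\cdot\bxi}d\bxi$, so $U(\mathbf 0)=1$ and $\widehat U=\vol(K/2)^{-1}\mathbf 1_{K/2}$. Then $F=|U|^2$ is continuous, nonnegative, $F(\mathbf 0)=1$, its Fourier transform is supported in $(K/2)-(K/2)$, which equals $K$ by the central symmetry of $K$, and by Parseval $\int_{\Rn}F=\|\widehat U\|_2^2=\vol(K/2)^{-1}=2^N/\vol(K)$; this ``Fej\'er kernel of $K$'' proves $\eta(K)\leq 2^N/\vol(K)$. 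I would also record the exact product identity $\eta(K_1\times K_2)=\eta(K_1)\eta(K_2)$, whose nontrivial inequality follows by slicing: if $F$ is admissible for $\eta(K_1\times K_2)$ then $F(\mathbf 0,\cdot)$ is admissible for $\eta(K_2)$, so $G(\bx):=\int F(\bx,\by)\,d\by$ satisfies $G\geq 0$, $G(\mathbf 0)\geq\eta(K_2)$ and $\supp\widehat G\subseteq K_1$, whence $\int_{\Rn}F=\int G\geq\eta(K_1)\eta(K_2)$. In particular the conjecture for boxes reduces to the one-dimensional interval.

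\emph{Step 3: the lower bound by Poisson summation (the tiling case).} Let $M\subset\Rn$ be a lattice whose dual lattice $M^{*}$ meets $K$ only at $\mathbf 0$. For $F$ admissible (after a routine regularization we may take it Schwartz) Poisson summation gives
\[
\sum_{\mathbf v\in M}F(\mathbf v)=\frac{1}{\det M}\sum_{\mathbf w\in M^{*}}\widehat F(\mathbf w)=\frac{1}{\det M}\,\widehat F(\mathbf 0)=\frac{1}{\det M}\dint_{\Rn}F ,
\]
since only $\mathbf w=\mathbf 0$ lies in $\supp\widehat F\subseteq K$; as the left side is $\geq F(\mathbf 0)\geq 1$, we obtain $\int_{\Rn}F\geq\det M$. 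Optimizing over $M$ — applying this to a small dilate $(1+\varepsilon)\Lambda_0$ of a critical lattice $\Lambda_0$ of $K$, so that the lattice points on $\partial K$ are pushed out, then letting $\varepsilon\to 0$ — yields $\eta(K)\geq 1/\Delta(K)$, where $\Delta(K)$ is the critical determinant of $K$. By Minkowski's convex body theorem $\Delta(K)\geq 2^{-N}\vol(K)$, with equality precisely when $K/2$ tiles $\Rn$ by a lattice; for such bodies — cubes, and more generally all centrally symmetric lattice tiles — the two bounds coincide and $\eta(K)=2^N/\vol(K)$.

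\emph{Step 4: the obstacle.} For a general origin-symmetric $K$ one has $\Delta(K)>2^{-N}\vol(K)$ — already for the Euclidean disc $\Delta=\tfrac{\sqrt3}{2}>\tfrac{\pi}{4}$ — so the single-lattice quadrature of Step 3 is strictly too weak, and the conjecture demands a sharper positive quadrature. By linear programming duality this is the problem of constructing, for each $K$, an extremal dual object (a positive measure, or ``Selberg-type'' one-sided approximant, adapted to $K$), which is exactly the long-standing Tur\'an problem for centrally symmetric convex bodies; this is the step I expect to be genuinely hard. For the Euclidean ball the radial structure rescues the argument: one expands $F$ radially and replaces the lattice quadrature by a Gauss-type quadrature on rays whose nodes are governed by zeros of Bessel functions, which proves the ball case. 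Extending this to further classes — zonoids, or polytopes with large symmetry groups — would require interpolating between these two mechanisms, and at present no single argument is known that covers all centrally symmetric convex bodies.
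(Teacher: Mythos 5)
The statement you were asked to prove is labelled a conjecture in the paper, and the paper itself does not prove it: it only establishes the upper bound $\eta(K)\leq 2^N/\vol(K)$ (the remark following the conjecture) and the two special cases $K=B$ and $K=Q$ (the theorem of Section 4). So your decision not to claim a complete proof is the right one, and your Step 4 correctly identifies the remaining difficulty as the Tur\'an problem for centrally symmetric convex bodies, which is genuinely open. The parts you do prove are sound and line up with, or slightly extend, what the paper actually shows. Your Step 2 construction $F=|U|^2$ with $\widehat U=\vol(K/2)^{-1}1_{K/2}$ is exactly the paper's observation $\eta(K)\leq\rho(K/2)=2^N/\vol(K)$ made explicit. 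Your Step 3 is the paper's proof of the cube case: the paper applies Poisson summation over $\Z^N$, citing Plancherel--P\'olya for the absolute convergence where you wave at a ``routine regularization'' (worth noting also that lattice points on $\partial Q$ are harmless because $\widehat F$ is continuous and vanishes off $K$, hence on $\partial K$); your generalization to arbitrary admissible lattices gives $\eta(K)\geq 1/\Delta(K)$ and so settles every origin symmetric lattice tile, not just the cube --- a correct, Arestov--Berdysheva-type strengthening of the paper's cube statement. The Tur\'an reformulation in Step 1 is likewise correct modulo the standard Bochner and Fourier inversion technicalities.

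Where you diverge from the paper is the ball. The paper proves $\eta(B)=2^N/\vol(B)$ by reducing to radial admissible functions, factoring the even one-dimensional restriction via an Ahiezer/Fej\'er--Riesz theorem as $f=qvv^*$, discarding the purely imaginary zeros, and then minimizing in the Paley--Wiener space $\mathbf{H}_{1/2}$ using the reproducing kernel and Cauchy--Schwarz. Your one-sentence appeal to a Gauss-type quadrature on rays with nodes at Bessel zeros is a different mechanism (of Gorbachev/Holt--Vaaler type, and known to work), but as written it is only a gesture: to count the ball case as proved you would have to exhibit the quadrature formula, verify the positivity of its weights, and check its exactness on the relevant class of bandlimited radial functions. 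With that caveat, your proposal is an accurate map of what can be proved and what cannot; it does not close the conjecture, but neither does the paper.
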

\begin{remark}
	From~\eqref{rhoeval} it follows that $\eta(K)\leq \rho(K/2)=2^{N}\rho(K)=2^{N}\vol(K)^{-1}$. The above conjecture asserts that there is equality in this inequality for {\it every} origin symmetric convex body $K$.% \mcomment{It is not clear to me why $\eta(K)\leq \rho(K/2)$}
\end{remark}

Our main goal in this section is to prove that this conjecture holds when $K$ is a ball and when $K$ is a cube. 
\begin{theorem} Let $B\subset \Rn$ be the Euclidean unit ball and $Q\subset \Rn$ be the Euclidean unit cube. Then
		\begin{equation}\label{etaball}
		\eta(B)=\dfrac{2^{N}}{\vol(B)}
	\end{equation}
	and
		\begin{equation}\label{etaq}
		\eta(Q)=\dfrac{2^{N}}{\vol(Q)}.
	\end{equation}
\end{theorem}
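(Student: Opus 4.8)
The plan is to establish the two equalities by combining the upper bound $\eta(K)\le 2^N/\vol(K)$ from the remark with a matching lower bound, and the lower bound is where the work lies. For the upper bound one exhibits, in each case, an explicit admissible nonnegative function. For the cube $Q=[-\hf,\hf]^N$ the natural candidate is the tensor product of one-dimensional Fej\'er kernels, $F(\bx)=\prod_{j=1}^N\Phi(x_j)$ with $\Phi(x)=\bigl(\frac{\sin\pi x}{\pi x}\bigr)^2$ (suitably normalized so $F(\mathbf 0)=1$); this is nonnegative, its Fourier transform is supported in $Q$ (the triangle function tensor), and $\int_{\Rn}F=1=2^N/\vol(Q)$, so it is extremal. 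For the ball, note that $F=|U|^2$ where $\widehat U$ is a multiple of $1_{(1/2)B}$; taking $U$ to be the admissible extremal for $\rho\bigl(\tfrac12 B\bigr)$ from \eqref{F_equality}, the square $F=|U|^2$ is continuous, nonnegative, has $F(\mathbf 0)=|U(\mathbf 0)|^2=1$, has $\widehat F=\widehat U*\widehat{\overline U}$ supported in $\tfrac12B+\tfrac12B=B$, and $\int_{\Rn}F=\int_{\Rn}|U|^2=\rho(\tfrac12 B)=2^N\vol(B)^{-1}$. Thus the upper bound is attained in both cases and the content of the theorem is the reverse inequality $\eta(K)\ge 2^N/\vol(K)$.

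\textbf{Lower bound, cube.} For $Q$ the plan is to exploit the tensor structure together with positivity. Let $F\ge0$ be admissible for $\eta(Q)$, so $\widehat F$ is supported in $[-\hf,\hf]^N$. The key identity is that, since $F\ge 0$ and is continuous with $F(\mathbf 0)\ge 1$, one has $\int_{\Rn}F\,d\bx=\widehat F(\mathbf 0)$ and, more usefully, a ``sampling'' or ``quadrature'' estimate: because $\widehat F$ lives in the unit cube, the periodization $\sum_{\bn\in\Zn}F(\bx+\bn)$ has Fourier coefficients $\widehat F(\bn)$, only the $\bn=\mathbf 0$ term of which survives, so $\sum_{\bn\in\Zn}F(\bx+\bn)\equiv\widehat F(\mathbf 0)=\int_{\Rn}F$ for every $\bx$. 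Evaluating at $\bx=\mathbf 0$ and discarding all terms but $\bn=\mathbf 0$ gives $\int_{\Rn}F\ge F(\mathbf 0)\ge 1$, which only yields $\eta(Q)\ge 1$, not $2^N$. To get the factor $2^N$ one instead uses the dilated lattice $\tfrac12\Zn$: the function $\widehat F$ is supported in $[-\hf,\hf]^N$, which tiles $\Rn$ under translation by $\Zn$ but not by $\tfrac12\Zn$; however one may split $F$ using the even-ness forced as in the $\rho$-argument, or better, apply Poisson summation over $2\Zn$ in the \emph{frequency} variable. The cleanest route: consider $G(\bx)=\sum_{\bn\in\Zn}(-1)^{n_1+\cdots+n_N}F(\bx+\tfrac12\bn)$; its nonzero Fourier coefficients are $\widehat F$ evaluated at points of $\Zn$ shifted by $(\hf,\dots,\hf)$, all of which lie outside the open cube except possibly on the boundary, forcing $G\equiv 0$ if $\widehat F$ vanishes on the relevant boundary pieces --- then $F(\mathbf 0)=\sum_{\bn\ne\mathbf 0}(-1)^{\cdots+1}F(\tfrac12\bn)$ and estimating $\sum_{\bn}F(\tfrac12\bn)$ by $2^N\int F$ (a Riemann-sum comparison valid because $\widehat F$ is compactly supported, hence $F$ is nice) closes the gap. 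I expect the correct mechanism to be exactly this interplay between the lattice $\Zn$ (for which $\mathrm{supp}\,\widehat F$ tiles) and a quadrature estimate $\int_{\Rn}F=\vol(Q)^{-1}\sum_{\bn\in\Zn}F(\bn)$ that, combined with $F\ge0$ and a suitable sign trick isolating $F(\mathbf 0)$ with coefficient $2^{-N}$, gives $\int F\ge 2^N F(\mathbf 0)$.

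\textbf{Lower bound, ball, and the main obstacle.} For the ball one cannot tensorize, so the plan is to pass to one dimension by radialization, mirroring the Blaschke--Santal\'o proof. Given admissible $F\ge 0$ for $\eta(B)$, average over rotations: $\widetilde F(\bx)=\int_{SO(N)}F(\rho\bx)\,d\rho$ is still nonnegative, still admissible (its Fourier transform is the rotational average of $\widehat F$, still supported in $B$), with the same value of $\int_{\Rn}$ and $\widetilde F(\mathbf 0)=F(\mathbf 0)\ge 1$; so we may assume $F$ radial, $F(\bx)=f(|\bx|)$. Then $\widehat F$ is radial, supported in $[0,1]$ radially, and $f$ extends to an even entire function of exponential type $1$ (Paley--Wiener--Stein applied to the radial profile, as with $R_\btheta$ above). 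Writing $\int_{\Rn}F=\omega_{N-1}\int_0^\infty f(r)r^{N-1}\,dr$ and $f(0)\ge 1$, the task becomes a one-dimensional extremal problem: minimize $\int_0^\infty f(r)r^{N-1}dr$ over nonnegative even band-limited $f$ with $f(0)\ge 1$. Here the hoped-for extremizer is $f(r)=\bigl(\tfrac{2^{N/2}\Gamma(N/2+1)J_{N/2}(\pi r)}{(\pi r/2)^{N/2}}\cdots\bigr)^2$, the squared normalized Bessel function, i.e. $|\widehat{1_{(1/2)B}}|^2$ up to scaling. To prove optimality one writes $f=|u|^2$ --- but this is precisely where single-variable factorization is legitimate, since $f$ is a one-dimensional nonnegative band-limited function, so by the Fej\'er--Riesz theorem (in its entire-function form, Krein's factorization) $f(r)=|u(r)|^2$ with $u$ of exponential type $\hf$ --- and then $\int_0^\infty |u(r)|^2 r^{N-1}dr$ is, up to the $\omega_{N-1}$ factor, exactly $\int_{\Rn}|U(\bx)|^2 d\bx$ for the radial extension $U$ of $u$, which is admissible for $\rho(\hf B)$; invoking \eqref{rhoeval}, $\int_{\Rn}|U|^2\ge\rho(\hf B)=2^N\vol(B)^{-1}|u(0)|^2\ge 2^N\vol(B)^{-1}$. \textbf{The main obstacle} is justifying the factorization $f=|u|^2$ with control on the type and integrability of $u$: the Fej\'er--Riesz / Krein factorization of a nonnegative entire function of exponential type $\hf$ that is moreover in the relevant weighted $L^1$ space must be shown to produce $u$ with $\widehat u$ supported in $[-\tfrac14,\tfrac14]$ (so that the radial extension $U$ has $\widehat U$ supported in $\hf B$) and with $\int_{\Rn}|U|^2<\infty$; the boundary/growth behavior and the weight $r^{N-1}$ (which is not the flat measure) require care, and this is the step I expect to consume most of the argument. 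The cube case avoids this subtlety because the tensor/lattice quadrature argument sketched above never needs a factorization.
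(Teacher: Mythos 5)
Your architecture matches the paper's (explicit extremals for the upper bounds, Poisson summation for the cube, radialization plus a Fej\'er--Riesz-type factorization feeding into $\rho(\hf B)$ for the ball), but both lower bounds contain genuine gaps. For the ball, the step you flag as the main obstacle is indeed where the proof lives, but the difficulty is not the one you diagnose (type and integrability of $u$); it is \emph{parity}. The Akhiezer factorization does give $f=uu^*$ with $\widehat u$ supported in the half-length interval, but that $u$ need not be even, and only an even profile admits a radial extension to $\Cn$. Moreover an even, nonnegative, band-limited $f$ need not factor as $|v|^2$ with $v$ even: for example $f(x)=(1+x^2)\left(\frac{\sin\pi x}{\pi x}\right)^4$ has simple zeros at $\pm i$, whereas if $v$ is even then $v^*(i)=\overline{v(-i)}=\overline{v(i)}$, so every non-real zero of $vv^*$ has even multiplicity. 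The paper's Proposition 1 therefore produces the refined factorization $f=z^{2k}q\,vv^*$ with $v$ even and $q$ carrying purely imaginary zeros, and a separate variational step is then required: if $q(iy)=0$, dividing out $(1+z^2/y^2)$ yields an admissible function with strictly smaller integral, so one may take $q\equiv 1$ (and $k=0$ since $f(0)\ge1$). Your sketch omits both the parity issue and this zero-deletion argument, and without them the reduction to $\rho(\hf B)$ does not go through.

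For the cube, the mechanism you describe gives an inequality in the wrong direction. With your normalization $Q=[-\hf,\hf]^N$, Poisson summation on the \emph{dense} lattice $\tfrac12\Zn$ yields the exact identity $\sum_{\bn}F(\bn/2)=2^N\int_{\Rn}F$, and bounding $F(\mathbf 0)$ by that sum gives only $\int F\ge 2^{-N}$; your alternating-sign periodization fares no better. The correct move is the \emph{sparse} lattice: $\sum_{\bn}F(2\bn)=2^{-N}\sum_{\bm}\widehat F(\bm/2)=2^{-N}\widehat F(\mathbf 0)$, because $\widehat F$ is continuous and vanishes on $\partial Q$, whence $\int F\ge 2^N F(\mathbf 0)\ge 2^N$. (The paper sidesteps all of this by taking $Q=[-1,1]^N$, for which one application of Poisson summation over $\Zn$ gives $\eta(Q)\ge 1=2^N/\vol(Q)$.) Note also that your candidate extremal $\prod_j(\sin\pi x_j/\pi x_j)^2$ has Fourier transform supported in $[-1,1]^N$, not in $[-\hf,\hf]^N$, and $2^N/\vol(Q)=2^N\neq 1$ for your normalization, so neither the admissibility nor the claimed value is right as stated. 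These cube issues are fixable, but as written the proposal proves neither lower bound.
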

The result (\ref{etaball}) is implicit in the work of Holt and Vaaler \cite{HV}. Since the proof of this result does not require the full force of the Holt-Vaaler machinery we will provide a self contained proof here. 

\begin{proof}  Suppose $F(\bz)$ is an admissible function for $\eta(B)$. By averaging over $SO(N)$ we find that 
	\[
		\dint_{\Rn} F(x) d{\bf x}=\dint_{\Rn}\dint_{SO(N)}  F({\bf gx}) d\mu({\bf g})d{\bf x}
	\]
where $\mu$ is the normalized Haar measure on $SO(N)$, and that the function 
	\[
		{\bf x}\mapsto \dint_{SO(N)}  F({\bf gx}) d\mu({\bf g})
	\]
is admissible.  In view of this observation we can safely limit our search to extremal functions that are \textit{radial}.  We will see momentarily that the extremal function we find can be factored as $ F(\bz)=U(\bz)U^{*}(\bz)$ where  $ U(\bz)$ is square integrable and radial on $\Rn$ and $ \widehat{U}(\bxi)$ is supported in $\hf B$. This allows us to recast the extremal problem as a minimization problem in a Hilbert space of the form
	\[
		{\bf H }_{\delta}=C(\Rn)\cap\lb { U(\bx)} \in L^{2}(\Rn)\; : \; \widehat{U}(\bxi)=0 \text{ whenever } \xi\not\in\delta B \rb,
	\]
specifically when $\delta=\hf$. The space ${\bf H}_{\delta}$ is a Hilbert space with respect to the $L^{2}(\Rn)$-inner product $\al \cdot  ,   \cdot  \ar$ with the property that for every $\bz\in\Cn$ and $ f\in {\bf H}_{\delta}$  
	\begin{equation}\label{reproIdentity}
		 f(\bz)=\al f,K(\bz,\cdot)\ar
	\end{equation}
where 
	\begin{equation}
		 K(\bo,\bz)=\dint_{\delta B}e^{-2\pi i(\bz-\ol{\bo})\cdot\bxi}d\bxi.
	\end{equation}
  We identify the elements of ${\bf H}_{\delta}$ with their entire extensions to $\Cn$. Let $ H_{\delta}$ be the 1-dimensional case of $\bf H_{\delta}$, that is $H_{\delta}={\bf H}_{\delta}$ when $N=1$. Functions in $H_{\delta}$ which are real-valued and non-negative on the real axis enjoy a factorization akin to that for non-negative trigonometric polynomials given by the Fej\'er-Riesz theorem. The following proposition\footnote{This proposition, due to Ahiezer \cite{Ahiezer, Boas, dB}, is essentially the original Fej\'er-Riesz theorem \cite{RSZ}. } is of central importance in the establishment of (\ref{etaball}), because it allows us to take an awkward $L^1$-minimization problem and reformulate it as a minimization problem in Hilbert space.

	\begin{proposition}\label{fejerThm}
		Suppose $F(z)\in  H_{\delta}$ is real valued and non-negative on the real axis and that $F(z)$ is not identically zero. 
		Then there exists an entire function $U(z)\in  H_{\delta/2}$ such that $U(z)$ is zero-free in $\mU$ and $F(z)=U(z)U^{*}(z)$.
		If $F(z)$ is also even, then $F(z)$ admits the factorization
			\[
				F(z)=z^{2k}Q(z)V(z)V^{*}(z)
			\]
		where $k$ is the multiplicity of the possible zero at $z=0$, $Q(z)$ has only purely imaginary zeros, and $V(z)$ is even.
	\end{proposition}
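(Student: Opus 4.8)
\emph{Plan of proof.} I would argue along the classical lines of the Fej\'er--Riesz theorem for entire functions, passing from $F$ to the distribution of its zeros and splitting them between $U$ and $U^{*}$. Two standard facts set the stage. By the Paley--Wiener--Stein theorem $H_{\delta}$ is the space of entire functions, square-integrable on $\R$, of exponential type $(\delta B)^{*}$; on the real axis such a function is the inverse Fourier transform of an $L^{1}$ function supported in $[-\delta,\delta]$, hence continuous and bounded, so $\log^{+}|F|$ is bounded and $F$ belongs to the Cartwright class. A short Bochner-type computation (pairing $F$ with the triangle function of width $2R$ and letting $R\to\infty$, using $F\ge0$ and monotone convergence on one side, continuity of $\widehat F$ at the origin on the other) shows moreover that a nonnegative $F\in H_{\delta}$ automatically lies in $L^{1}(\R)$; this will be what forces the square root to be in $L^{2}$.

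Next I would analyze the zeros. Since $F$ is real on $\R$ we have $F=F^{*}$, so the zero multiset is symmetric under complex conjugation; together with $F\ge0$ on $\R$ this makes every real zero -- in particular the one at the origin, whose order is therefore $2k$ -- of even multiplicity, and removes the exponential factor from the Hadamard product, so that $F(z)=C\,z^{2k}\lim_{R\to\infty}\prod_{0<|z_{n}|<R}(1-z/z_{n})$ with $C>0$. I would then \emph{define} $U$, with no exponential factor, by keeping $z^{k}$, each real zero with half its multiplicity, and from each conjugate pair of non-real zeros the representative with $\Imaginary\le0$, placing $C^{1/2}$ in front. The thinned multiset is still a Cartwright zero multiset (the defining sum and the Lindel\"of condition survive halving), so the product converges; by construction $F=UU^{*}$, $U$ has no zeros in $\mU$, and $|U(x)|^{2}=F(x)$ on $\R$ gives $U\in L^{2}(\R)$.

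The genuinely delicate step -- the one I expect to be the main obstacle -- is to verify $U\in H_{\delta/2}$, i.e.\ that the exponential type is \emph{halved}, not merely that $F$ factors as above. The plan here is: because $U$ carries no exponential factor it is a Cartwright-class canonical product, hence has symmetric indicator along the imaginary axis, so $\supp\widehat U$ is a symmetric interval $[-\tfrac12w,\tfrac12w]$; since $\widehat{U^{*}}(\xi)=\overline{\widehat U(-\xi)}$ is supported on the same interval, Titchmarsh's convolution theorem applied to $\widehat F=\widehat U\ast\widehat{U^{*}}$ gives $\sup\supp\widehat F=w$, whence $w=\delta'\le\delta$ for the half-width $\delta'$ of the band of $F$, and therefore $\supp\widehat U\subseteq[-\delta/2,\delta/2]$. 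This is precisely the content of Ahiezer's form of the Fej\'er--Riesz theorem, and I would either quote it from~\cite{Ahiezer,Boas,dB} or run exactly this argument.

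Finally, for the even case I would feed the extra symmetry $F(z)=F(-z)$ into the same zero analysis. The zero multiset is now invariant under $z\mapsto-z$ as well, so the non-real zeros that are not purely imaginary occur in quadruples $\{w,\overline w,-w,-\overline w\}$, the purely imaginary zeros in pairs $\{is,-is\}$ (of possibly odd multiplicity), and the real zeros $\ne0$ in pairs $\{x,-x\}$ (of even multiplicity). I would take $Q$ to be the Cartwright canonical product over the purely imaginary zeros -- so $Q$ has only purely imaginary zeros, and is even -- pull out $z^{2k}$ for the zero at the origin, and let $V$ be the regularized product of $(1-z^{2}/x^{2})^{\mu_{x}}$ over positive real zeros of order $2\mu_{x}$ and of $(1-z^{2}/w^{2})^{q_{w}}$ over the quadruples of order $q_{w}$; being a function of $z^{2}$, $V$ is even with $V(0)\ne0$, and $VV^{*}$ is checked to reproduce exactly the real and quadruple zeros of $F$. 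Consequently $F/(z^{2k}QVV^{*})$ is a zero-free entire function dividing the Cartwright function $F$, hence a positive constant, which I absorb into $V$. (An indicator count in $F=C\,z^{2k}QVV^{*}$ then bounds the type of $V$ by $\pi\delta$, and $|V(x)|^{2}=F(x)/(x^{2k}Q(x))\in L^{1}$, so in fact $V\in H_{\delta/2}$; one may alternatively derive the even factorization from the first part together with the uniqueness, within $H_{\delta/2}$, of the zero-free-in-$\mU$ root under $z\mapsto-z$, and I would use whichever turns out shorter.)
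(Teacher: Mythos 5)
Your plan is correct, but it follows a genuinely different route from the paper. The paper never touches the Hadamard factorization or the Cartwright class: it multiplies $F$ by the finite Blaschke products $B_{M}(z)=\prod_{m\le M}\frac{1-z/\ol{\omega_{m}}}{1-z/\omega_{m}}$ over the upper-half-plane zeros, notes that $|B_{M}|=1$ on $\R$ so $\|B_{M}F\|_{2}=\|F\|_{2}$, extracts a weak limit $G$ in the reproducing-kernel Hilbert space $H_{\delta}$ (weak convergence plus the identity $f(z)=\al f,K(z,\cdot)\ar$ gives pointwise convergence), and uses $|B_{M}|\ge1$ on $\mU$ to see that $G$ is zero-free there with $|G|=|F|$ on $\R$; then $F^{2}=GG^{*}$, the even multiplicities give $G=U^{2}$, and $F=UU^{*}$, with the halving of the type coming for free from $U^{2}=G\in H_{\delta}$ rather than from your indicator-symmetry-plus-Titchmarsh argument. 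The even case is likewise obtained in the paper by splitting the zeros of the already-constructed $U$ (as $U=z^{k}p\cdot R(z)R^{*}(-z)$, $V=R(z)R(-z)$, $Q=pp^{*}$), not by an independent canonical-product construction. What your route buys is explicitness (the zero multiset of $U$ is written down) and, importantly, it surfaces a point the paper leaves tacit: to get $U\in L^{2}(\R)$, and hence $U\in H_{\delta/2}$ rather than merely of exponential type $\pi\delta$, one needs $F\in L^{1}(\R)$, which your Fej\'er-kernel/monotone-convergence observation supplies; the paper's proof needs the same fact to pass from $|U|^{2}=F$ on $\R$ to square-integrability. What the paper's route buys is economy: no Lindel\"of condition, no principal-value products, no Titchmarsh convolution theorem. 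One caution on your side: the claim that the Lindel\"of condition survives the asymmetric halving of the conjugate pairs rests on the Blaschke condition $\sum|\Imaginary(1/z_{n})|<\infty$ for Cartwright-class zero sets; you should state that you are invoking it, since without it the boundedness of the partial sums $\sum_{|z_{n}|<R}1/z_{n}$ for the thinned multiset does not follow from that of the full one.
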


	\begin{proof}
	Let $\lb \omega_{n} : n=1,2...\rb$ be the zeros of $F(z)$, listed with appropriate multiplicity, in the upper half plane and let 
		\[
			B_{N}(z)=\dprod_{n=1}^{N}\dfrac{1-z/\ol{\omega_{n}}}{1-z/\omega_{n}}.
		\]
	We define a sequence of entire functions $F_{N}(z)$ by $F_{N}(z)=B_{N}(z)F(z)$. Each of the functions $F_{N}(z)$ is in $ H_{\delta}$ by the Paley-Wiener theorem. Since $\|F\|=\|F_{N}\|$ for each $N$, it follows that a subsequence of $F_{N}$ converges weakly to some $G(z)$ in the Hilbert space. By (\ref{reproIdentity}) it follows that $F_{N}(z)\ra G(z)$ pointwise for a subsequence. Since $|B_{N}(z)|\geq 1$ if $z\in\mU$ with equality when $z$ is real, it follows that $G(z)$ is zero free in $\mU$ and that $|G(t)|=|F(t)|$ for real $t$. This shows that $F(z)^{2}=F(z)F^{*}(z)=G(z)G^{*}(z)$. In particular the non-real zeros of $G(z)$ occur with even multiplicity. \newline
\indent Since $F(z)$ is real valued and non-negative on $\R$, the zeros of $G(z)$ occur with even multiplicity and so there is an entire function $U(z)$ for which $G(z)=U(z)^{2}$. Then $F(z)^{2}=\lb U(z)U^{*}(z) \rb^{2}$ and since $F(z)$ is real valued and non-negative on $\R$ it follows that $F(z)=U(z)U^{*}(z)$. \newline
\indent If $F(z)$ is even, write $U(z)=z^{k}p(z)R(z)R^{*}(-z)$ where: $R(z)$ contains the zeros of $U(z)$ which have strictly positive real part, $p(z)$ contains only purely imaginary zeros, and $k$ is the multiplicity of the zero at 0. Let $V(z)=R(z)R(-z)$ and $Q(z)=p(z)p^{*}(z)$. 
\end{proof}

We now introduce a notation for restrictions and extensions for radial functions. If the restriction of $G(\bz)$ to $\Rn$ is radial,  we let $g(z)$ denote its restriction to one of the coordinate axes. Similarly if $g(z)$ is an even entire function, we may extend $g(z)$ to a radial function $G(\bz)$ on $\Cn$ by
		\[
			G(\bz)=\dsum_{\ell=0}^{\infty}\dfrac{g^{(2\ell)}(0) }{(2\ell)!}\big\lbrace z_{1}^{2}+\cdots+z_{N}^{2} \big\rbrace^{\ell}.
		\]
Let $F(\bz)$ be an admissible function for our problem and assume that $F(\bz)$ is radial. Then the corresponding restriction $f(z)$ is an even function in $ H_{1}$ that  is real-valued and non-negative on the real axis. Therefore $f(z)$ admits the representation
		\[
			f(z)=q(z)v(z)v^{*}(z) 
		\]
where $q(z)$ and $v(z)$ are even entire functions and $q(z)$ has only purely imaginary zeros. We choose the functions in such a way that $|v(0)|^{2}=q(0)=1$. Seeing that $q(z)$ and $v(z)$ are even, we extend them to $\Cn$ to obtain the following factorization for $F(\bz)$
		\[
			 F(\bz)=Q(\bz)V(\bz)V^{*}(\bz).
		\]
The integral of $ F(\bx)$ now has the form
		\[
			 \dint_{\Rn}F(\bx)d\bx = \dint_{\Rn}  Q(\bx)|V(\bx)|^{2}d\bx
		\]
But if $ F(\bx)$ is extremal, then $ q(z)$ is zero free. Suppose, by way of contradiction, that $q(z)$ has a zero at say $iy$ for $y>0$. Then
		\[
			 q(z)=\lp1+\dfrac{z^{2}}{y^{2}}\rp\tilde{q}(z)
		\]
for some even entire function $\tilde{q}(z)$ such that $\tilde{q}(0)=1$, and $\tilde{q}(x)\geq 0$ for real $x$. In particular, $\tilde{q}(x)< q(x)$ for all non-zero real numbers $x$. This plainly shows that the admissible function $\tilde{F}(\bz)=\tilde{Q}(\bz)V(\bz)V^{*}(\bz)$ has smaller $L^{1}$-norm than $F(\bz)$. Therefore we may assume
		\[
			 F(\bz)=V(\bz)V^{*}(\bz)
		\]
where $V(\bx)\in {\bf H}_{\hf}$. But by the Cauchy-Schwarz Inequality and (\ref{reproIdentity})
		\[
			1\leq F({\bf 0})= |V({\bf 0})|^{2}\leq K({\bf 0,0}) \| V\|_{2}^{2}=\vol(\hf B)  \| V\|_{2}^{2}.
		\]
where equality occurs if and only if $F({\bf 0})=1$ and $ V(z)$ is a scalar multiple of $ K({\bf 0,z})$. But 
		\[
			  \|V\|_{2}^{2} =\dint_{\Rn}   F(\bx)d\bx.
		\]
Therefore 
		\[
			\eta(B)= \dfrac{2^{N}}{\vol(B)}. 
		\]
		
 \noindent {\bf Now we will show (\ref{etaq}),} but for $Q=[-1,1]^{N}$.\newline 
 
Suppose that $F(\bx)$ is an admissible function for $\eta(Q)$. Then by the Poisson summation formula (see, for instance, \cite{SW})
	\[
		1\leq\dsum_{\bn\in\Z^N}F(\bn)=\dsum_{\bm\in\Z^N}\widehat{F}(\bm)= \widehat{F}(0)= \dint_{\Rn} F(\bx)d\bx.
	\]
We note that both expressions in the Poisson summation formula converge absolutely by a classical result of Poly\'a and Plancherel \cite{pp}. By taking the infimum over all admissible functions $F(\bx)$, we find that $\eta(Q)\geq 1$. But the function
	\[
		F(\bx)=\dprod_{n=1}^{N}\lb  \dfrac{\sin\pi x_{n} }{\pi x_{n}} \rb^{2}.
	\]
is admissible for $\eta(Q)$ and integrating $F(\bx)$ one variable at a time, we find that its integral is equal to 1. This shows $\eta(Q)=2^{N}\vol(Q)^{-1}=1$.\newline

\end{proof}

%\begin{remark}
%	A Hilbert space $H$ which is nontrivial and whose elements are {\it entire functions} is called a {\it de Branges space} if (i) $F(z)\in H$ and $\omega$ is a non-real zero of $F(z)$, then $(z-\ol{\omega})F(z)/(z-\omega)\in H$ and has the same norm as $F(z)$, (ii) $F(z)\in H$ implies $F^{*}(z)\in H$ and has the same norm as $F(z)$, and (iii) for every $\omega\in\C$, then functional $F\mapsto F(\omega)$ is continuous. In view of these properties, Proposition \ref{fejerThm} generalizes almost without change to non-negative functions in a de Branges space since the only properties of the space ${\bf H}_{1}$ we used in the proof are the properties (i)-(iii) above.
%\end{remark}

\section*{Acknowledgments.}%\mcomment{I changed author to second author, but you probably can find a better way of writing it}
Michael Kelly would like to thank Emanuel Carneiro, the faculty, and staff of the Instituto Nacional Matem\'{a}tica Pura e Aplicada (IMPA) in Rio de Janeiro, Brazil, where most of this work was performed. He would also like to thank William Beckner, Hermann K\" onig, Keith Rodgers, Fernando Shao, and Kannan Soundararajan for their remarks and for many stimulating discussions.
%The author offers his most sincere thanks to Gabriele Binachi for helpful suggestions and for showing him how to obtain the cases of equality.
Sincere thanks are also due for an anonymous reviewer for his many useful comments and suggestions which have been taken into account and incorporated into the present version of this paper. Finally Michael Kelly would like to thank Jeffrey Vaaler for his unwavering encouragement and support.

%\nocite{*}
\bibliographystyle{amsalpha}
\bibliography{santalo}	

\providecommand{\bysame}{\leavevmode\hbox to3em{\hrulefill}\thinspace}
\providecommand{\MR}{\relax\ifhmode\unskip\space\fi MR }
% \MRhref is called by the amsart/book/proc definition of \MR.
\providecommand{\MRhref}[2]{%
  \href{http://www.ams.org/mathscinet-getitem?mr=#1}{#2}
}
\providecommand{\href}[2]{#2}
\begin{thebibliography}{NPRZ10}

\bibitem[Ahi48]{Ahiezer}
N.~I. Ahiezer, \emph{{On the theory of entire functions of finite degree}},
  Doklady Akad. Nauk SSSR (N.S.) \textbf{63} (1948), 475--478. \MR{0027333
  (10,289h)}

\bibitem[Bia13]{Bia13}
Gabriele Bianchi, \emph{{The covariogram and Fourier-Laplace transform in
  $\mathbb{C}^n$}}, 2013, arXiv:1312.7816 [math.MG].

\bibitem[Bla17]{Blaschke1917}
W.~Blaschke, \emph{{{\"U}ber affine Geometrie VII: Neue Extremeingenschaften
  von Ellipse und Ellipsoid}}, Ber. Verh. S{\"a}chs. Akad. Wiss., Math. Phys.
  Kl. \textbf{69} (1917), 412--420.

\bibitem[BM87]{BM}
J.~Bourgain and V.~D. Milman, \emph{{New volume ratio properties for convex
  symmetric bodies in {${\bf R}^n$}}}, Invent. Math. \textbf{88} (1987), no.~2,
  319--340. \MR{880954 (88f:52013)}

\bibitem[Boa54]{Boas}
Jr. Ralph~Philip Boas, \emph{{Entire functions}}, Academic Press Inc., New
  York, 1954. \MR{0068627 (16,914f)}

\bibitem[Cas92]{C}
J.~W.~S. Cassels, \emph{{Obituary: {K}urt {M}ahler}}, Bull. London Math. Soc.
  \textbf{24} (1992), no.~4, 381--397. \MR{1165384 (93f:01016)}

\bibitem[dB68]{dB}
Louis de~Branges, \emph{{Hilbert spaces of entire functions}}, Prentice-Hall
  Inc., Englewood Cliffs, N.J., 1968. \MR{0229011 (37 \#4590)}

\bibitem[HV96]{HV}
Jeffrey~J. Holt and Jeffrey~D. Vaaler, \emph{{The {B}eurling-{S}elberg extremal
  functions for a ball in {E}uclidean space}}, Duke Math. J. \textbf{83}
  (1996), no.~1, 202--248. \MR{1388849 (97f:30038)}

\bibitem[Kim13]{Kim2013}
Jaegil Kim, \emph{{Minimal volume product near Hanner polytopes}}, Journal of
  Functional Analysis (2013), no.~0, --.

\bibitem[Mah39]{M}
Kurt Mahler, \emph{{Ein {M}inimalproblem f{\"u}r {K}onvexe {P}olygone}},
  Mathematica (Zutphen) \textbf{B} (1939), 118--127.

\bibitem[MP90]{MeyPaj1990}
Mathieu Meyer and Alain Pajor, \emph{{On the {B}laschke-{S}antal{\'o}
  inequality}}, Arch. Math. (Basel) \textbf{55} (1990), no.~1, 82--93.

\bibitem[MR89]{MR}
M.~Meyer and S.~Reisner, \emph{{Characterizations of ellipsoids by
  section-centroid location}}, Geom. Dedicata \textbf{31} (1989), no.~3,
  345--355. \MR{1025195 (90m:52006)}

\bibitem[Naz12]{N}
Fedor Nazarov, \emph{{The {H}{\"o}rmander {P}roof of the {B}ourgain-{M}ilman
  {T}heorem}}, {Geometric Aspects of Functional Analysis} (Bo'az Klartag,
  Shahar Mendelson, and Vitali~D. Milman, eds.), {Lecture Notes in
  Mathematics}, vol. 2050, Springer Berlin Heidelberg, 2012, pp.~335--343.

\bibitem[NPRZ10]{NPRZ}
Fedor Nazarov, Fedor Petrov, Dmitry Ryabogin, and Artem Zvavitch, \emph{{A
  remark on the {M}ahler conjecture: local minimality of the unit cube}}, Duke
  Math. J. \textbf{154} (2010), no.~3, 419--430. \MR{2730574 (2012a:52010)}

\bibitem[Pet85]{Petty1985}
C.~M. Petty, \emph{{Affine isoperimetric problems}}, {Discrete geometry and
  convexity ({N}ew {Y}ork, 1982)}, {Ann. New York Acad. Sci.}, vol. 440, New
  York Acad. Sci., New York, 1985, pp.~113--127. \MR{809198 (87a:52014)}

\bibitem[PP37]{pp}
M.~Plancherel and G.~P{\'o}lya, \emph{{Fonctions enti{\`e}res et int{\'e}grales
  de fourier multiples}}, Comment. Math. Helv. \textbf{10} (1937), no.~1,
  110--163. \MR{1509570}

\bibitem[Rei85]{Reisner1}
Shlomo Reisner, \emph{{Random polytopes and the volume-product of symmetric
  convex bodies}}, Math. Scand. \textbf{57} (1985), no.~2, 386--392. \MR{832364
  (87g:52011)}

\bibitem[Rei86]{Reisner2}
\bysame, \emph{{Zonoids with minimal volume-product}}, Math. Z. \textbf{192}
  (1986), no.~3, 339--346. \MR{845207 (87g:52022)}

\bibitem[RSN55]{RSZ}
Frigyes Riesz and B{\'e}la Sz.-Nagy, \emph{{Functional analysis}}, Frederick
  Ungar Publishing Co., New York, 1955, Translated by Leo F. Boron. \MR{0071727
  (17,175i)}

\bibitem[RZ]{RyaZva}
Dmitry Ryabogin and Artem Zvavitch, \emph{{Analytic methods in convex
  geometry}}, in preparation.

\bibitem[{Sai}81]{SR}
J.~{Saint Raymond}, \emph{{Sur le volume des corps convexes sym{\'e}triques}},
  {Initiation {S}eminar on {A}nalysis: {G}. {C}hoquet-{M}. {R}ogalski-{J}.
  {S}aint-{R}aymond, 20th {Y}ear: 1980/1981}, {Publ. Math. Univ. Pierre et
  Marie Curie}, vol.~46, Univ. Paris VI, Paris, 1981, pp.~Exp. No. 11, 25.
  \MR{670798 (84j:46033)}

\bibitem[San49]{Santalo1949}
L.~A. Santal{\'o}, \emph{{Un invariante afin para los cuerpos convexos del
  espacio des n dimensiones}}, Portugaliae Math. \textbf{8} (1949), 155--161.
  \MR{0039293 (12,526f)}

\bibitem[Sch14]{S2}
Rolf Schneider, \emph{{Convex bodies: the {B}runn-{M}inkowski theory}},
  expanded ed., {Encyclopedia of Mathematics and its Applications}, vol. 151,
  Cambridge University Press, Cambridge, 2014. \MR{3155183}

\bibitem[SW71]{SW}
Elias~M. Stein and Guido Weiss, \emph{{Introduction to {F}ourier analysis on
  {E}uclidean spaces}}, Princeton University Press, Princeton, N.J., 1971,
  Princeton Mathematical Series, No. 32. \MR{0304972 (46 \#4102)}

\bibitem[Tao08]{Tao}
Terence Tao, \emph{{Structure and randomness}}, American Mathematical Society,
  Providence, RI, 2008, Pages from year one of a mathematical blog. \MR{2459552
  (2010h:00002)}

\end{thebibliography}

\end{document}